\renewcommand{\Im}{{\rm Im}}
\newcommand{\sgn}{\mbox{sgn}}
\newcommand{\SL}{\mathrm{SL}}
\newcommand{\N}{\mathbb N}
\newcommand{\C}{\mathbb C}
\newcommand{\Q}{\mathbb Q}
\theoremstyle{plain}
\newtheorem{thm}{Theorem}[section]
\newtheorem{cor}[thm]{Corollary}
\newtheorem{lem}[thm]{Lemma}
\theoremstyle{definition}
\newtheorem*{rem}{Remark}
\numberwithin{equation}{section}
\numberwithin{thm}{section}
\setlist[enumerate]{leftmargin=*,label=\rm{(\arabic*)}}
\renewcommand{\sgn}{\textnormal{sgn}}
\newcommand{\re}{{\rm Re}}
\renewcommand{\sgn}{{\rm sgn}}
\newcommand{\R}{\mathbb R}
\newcommand{\Z}{\mathbb Z}
\setlist[itemize]{noitemsep, topsep=0pt}
\newcommand{\vast}{\bBigg@{2}}
\newcommand{\Vast}{\bBigg@{5}}
\title[Prime-detecting quasimodular forms]{On a conjecture about prime-detecting quasimodular forms}
\author{Ben Kane}
\address{The University of Hong Kong, Department of Mathematics, Pokfulam, Hong Kong}
\email{bkane@hku.hk}
\author{Krishnarjun Krishnamoorthy}
	\address{Beijing Institute of Mathematical Sciences and Applications (BIMSA), No. 544, Hefangkou Village, Huaibei Town, Huairou District, Beijing.}
    	\email[Krishnarjun Krishnamoorthy]{krishnarjunmaths@outlook.com}
\author{Yuk-Kam Lau}
\address{Weihai Institute for Interdisciplinary Research, Shandong University, China and Department of Mathematics, The University of Hong Kong,  Pokfulam, Hong Kong}
\email{yukkamlau@hku.hk}
\begin{document}
\date{\today}
\keywords{Quasimodular forms, sign changes of Fourier coefficients}
\subjclass[2020]{11F11,11F30}
\thanks{The research of the first author was supported by grants from the Research Grants Council
of the Hong Kong SAR, China (project numbers HKU 17314122, HKU 17305923).}
\begin{abstract}
	Motivated by weighted partition of $n$ that vanish if and only if $n$ is a prime, Craig, van Ittersum, and Ono conjectured a classification of quasimodular forms which detect primes in the sense that the $n$-th Fourier coefficient vanishes if and only if $n$ is a prime. In this paper, we prove this conjecture by showing that Fourier coefficients of quasimodular cusp forms exhibit infinitely many sign changes. 
\end{abstract}
\maketitle
\section{Introduction}

In \cite{CvIO}, Craig, van Ittersum, and Ono investigate certain weighted partition generating functions that detect primes in the sense that their $n$-th Fourier coefficient vanishes if and only if $n$ is prime. Letting 
\[
\mathcal{U}_a(q)=\sum_{n\geq 1} M_{a}(n) q^n := \sum_{0<s_1<s_2<\dots<s_a} \frac{q^{s_1+s_2+\dots+s_a}}{\left(1-q^{s_1}\right)^2\left(1-q^{s_2}\right)^2\cdots \left(1-q^{s_a}\right)^2}
\]
denote the MacMahon \cite{MacMahon} $q$-series, the numbers $M_a(n)$ give the number of partitions into precisely $a$ different parts, weighted by the multiplicities of each of the parts. In \cite{CvIO}, they show that an integer $n$ is prime if and only if certain linear relations hold for the $M_a(n)$, such as (see \cite[Theorem 1.1 (1)]{CvIO})
\[
\left(n^2-3n+2\right)M_1(n)=8M_2(n).
\]
They investigate these identities within the larger context of quasimodular forms whose $n$-th Fourier coefficient vanishes if and only if $n$ is prime. In particular, they define a subset $\Omega$ of the graded ring of (integer weight) quasimodular forms such that $f\in\Omega$ if and only if for ($q:=e^{2\pi i\tau}$)
\[
f(\tau)=\sum_{n\geq 0} c_f(n) q^n
\]
we have 
\[
c_f(n)\geqslant 0\text{ and }c_f(n)=0\text{ if and only if $n$ is prime}.
\]
Letting $\mathcal{E}$ denote the space of \begin{it}quasimodular Eisenstein series\end{it} (i.e., the vector space spanned by Eisenstein series and their derivatives), they classify $\mathcal{E}\cap\Omega$ in \cite[Theorem 2.3]{CvIO}. Moreover, they conjecture in \cite[Conjecture 2.2]{CvIO} that every element of $\Omega$
 is indeed a quasimodular Eisenstein series. We prove this conjecture here.
\begin{thm}\label{thm:PrimeDetecting}
We have $\Omega=\mathcal{E}\cap \Omega$.
\end{thm}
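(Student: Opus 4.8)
The plan is to combine the structure theory of quasimodular forms with a second‑moment estimate, over primes, for the Fourier coefficients of a quasimodular cusp form. Write $D:=\tfrac{1}{2\pi i}\tfrac{d}{d\tau}=q\tfrac{d}{dq}$. By the structure theorem for quasimodular forms on $\SL_2(\IZ)$, the graded ring of quasimodular forms is the direct sum of $\mathcal{E}$ and the space $\widetilde{\mathcal{S}}=\bigoplus_{m}\bigoplus_{j\ge 0}D^{j}S_{m}$ of \emph{quasimodular cusp forms} (here $S_m$ is the space of cusp forms of weight $m$). Hence every $f\in\Omega$ has a unique decomposition $f=E+g$ with $E\in\mathcal{E}$ and $g\in\widetilde{\mathcal{S}}$; applied weight by weight this also covers the inhomogeneous elements of $\Omega$, which do occur (e.g.\ the MacMahon example behind \cite[Theorem 1.1 (1)]{CvIO}). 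It suffices to prove $g=0$: then $f=E\in\mathcal{E}\cap\Omega$, while the inclusion $\mathcal{E}\cap\Omega\subseteq\Omega$ is trivial.

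The key elementary point is that \emph{the prime‑indexed coefficients of any $E\in\mathcal{E}$ form a polynomial in the prime}: $E$ is a finite combination of terms $D^{j}E_{k}$, with $c_{D^jE_k}(n)=-\tfrac{2k}{B_k}\,n^{j}\sigma_{k-1}(n)$, and $\sigma_{k-1}(p)=1+p^{k-1}$ for every prime $p$, so there is a fixed $P\in\IR[x]$ with $c_E(p)=P(p)$ for all primes $p$. Since $f\in\Omega$ forces $c_f(p)=0$, this gives $c_g(p)=-P(p)$ for all primes $p$. The theorem therefore reduces to the statement that \emph{if $g$ is a nonzero quasimodular cusp form then $p\mapsto c_g(p)$ is not the restriction to the primes of a polynomial} — equivalently, that $c_g$ has infinitely many sign changes even along the primes (a nonzero polynomial being eventually of one sign, and the zero polynomial being excluded below). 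Note that this argument uses only $c_f(p)=0$, not the full membership in $\Omega$.

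To prove the reduced statement I would estimate $\sum_{p\le x}c_g(p)^{2}$. Let $K$ be the largest weight occurring in $g$; then $K$ is even. Deligne's bound $|a_h(p)|\ll p^{(\kappa-1)/2}$ for $h\in S_\kappa$ shows the weight‑$k$ component of $g$ contributes $\ll p^{(k-1)/2}$ to $c_g(p)$, so only the top‑weight component $g_K$ matters: $\sum_{p\le x}c_g(p)^2=\sum_{p\le x}c_{g_K}(p)^2+o(x^{K}/\log x)$. Writing $g_K=\sum_j D^j h_j$ with $h_j\in S_{K-2j}$ and expanding each $h_j$ in Hecke eigenforms, one splits $\sum_{p\le x}c_{g_K}(p)^{2}$ into diagonal and cross terms (and any two distinct eigenforms entering here have distinct characters, so the cross terms really are ``off‑diagonal''). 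For an eigenform $\phi\in S_{K-2j}$ the Hecke relation $a_\phi(p)^2=a_\phi(p^2)+p^{K-2j-1}$ together with the prime number theorem for $L(\mathrm{sym}^2\phi,s)$ shows its diagonal contribution is $\asymp x^{K}/\log x$ with a positive constant — the exponent $2j+(K-2j-1)+1=K$ being independent of $j$, which is exactly why every summand of $g_K$ counts equally. The cross terms, governed by sums $\sum_{p\le x}a_\phi(p)a_{\phi'}(p)$ for distinct eigenforms $\phi\ne\phi'$ (possibly of different weights), are $o(x^{K}/\log x)$ by the prime number theorem for the Rankin–Selberg $L$‑functions $L(\phi\times\phi',s)$, which are entire. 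Summing, $\sum_{p\le x}c_g(p)^2\sim c\,x^{K}/\log x$ with $c>0$, since $g_K\ne 0$ keeps the diagonal strictly positive. But $\sum_{p\le x}c_g(p)^2=\sum_{p\le x}P(p)^2$, which is identically $0$ if $P\equiv 0$ and $\sim c'\,x^{\,2\deg P+1}/\log x$ otherwise: the first is impossible as the left side tends to infinity, and the second forces $2\deg P+1=K$, impossible since $K$ is even. Hence $g=0$.

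The main obstacle is the cancellation bookkeeping in the estimate for $\sum_{p\le x}c_g(p)^2$: because every piece $D^jS_{k-2j}$ of a weight‑$k$ quasimodular cusp form has coefficients of the common size $p^{(k-1)/2}$ at primes, there is no single dominant cusp form to reduce to, and one must show that the positive Rankin–Selberg ``diagonal'' (fed by the pole of $\zeta$) survives against the combined interference of all cross terms and of the lower‑weight components, sharply enough to isolate the exact power $x^{K}$ together with its parity. The analytic inputs — Deligne's bound, and the analytic continuation, nonvanishing on $\Re(s)=1$, and attendant prime number theorems for symmetric‑square and Rankin–Selberg $L$‑functions of level‑one cusp forms — are all classical, so the difficulty is organizational rather than one of creating new harmonic analysis.
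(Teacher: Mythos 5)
Your argument is correct, but the mechanism for the final contradiction is genuinely different from the paper's. The paper proves two structural lemmas: (i) a nonzero quasimodular cusp form with real coefficients has infinitely many sign changes along primes, shown by comparing the first moment $\sum_{p\leqslant x}c_{F_S}(p)$ (small, by the prime number theorem for the eigenform $L$-functions) with the second moment $\sum_{p\leqslant x}|c_{F_S}(p)|^2$ (large, by Rankin--Selberg); and (ii) the prime-indexed coefficients of any element of $\mathcal{E}$ have eventually constant sign, being a polynomial in $p$. You keep the polynomial observation (ii) but replace the sign-change lemma entirely: you equate $\sum_{p\leqslant x}c_g(p)^2$ with $\sum_{p\leqslant x}P(p)^2$ and contradict via the parity of the growth exponent, since the cuspidal second moment grows like $x^{K}/\log x$ with $K$ even (all quasimodular weights at level one being even), while a nonzero polynomial $P$ of degree $d$ gives $x^{2d+1}/\log x$ with odd exponent, and $P\equiv 0$ is ruled out because the left side tends to infinity. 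This buys you an argument that needs only the diagonal/off-diagonal second-moment analysis (symmetric-square and Rankin--Selberg prime number theorems plus Deligne), dispensing with the first-moment estimate; the cost is that the argument is tied to the exact polynomial shape of Eisenstein coefficients at primes, whereas the paper's sign-change lemma is a standalone statement that they explicitly intend to reuse (e.g.\ for primes in arithmetic progressions), and their first-vs-second moment comparison is what makes that lemma portable. One small point you should make explicit: a priori the decomposition $f=E+g$ could have complex coefficients, so either note that conjugation of Fourier coefficients preserves $\mathcal{E}$ and the cuspidal space (both have bases with rational coefficients), whence $f$ having real coefficients forces $E$ and $g$ real and $P\in\IR[x]$, or simply run your estimate with $|c_g(p)|^2=|P(p)|^2=(P\overline{P})(p)$, which leaves the parity argument untouched; the paper handles the same issue by splitting $F=F_{\re}+iF_{\Im}$ at the outset.
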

Combining Theorem \ref{thm:PrimeDetecting} with \cite[Theorem 2.3]{CvIO}, one obtains a full classification of quasimodular forms which detect primes. Letting $D:=\frac{1}{2\pi i} \frac{d}{d\tau} = q\frac{d}{dq}$, following \cite[(2.3)]{CvIO} we set 
\[
H_k:=\begin{cases} \frac{1}{6}\left((D^2-D+1)G_2-G_4\right)&\text{if }k=6,\\
\frac{1}{24}\left(-D^2G_{k-6}+\left(D^2+1\right)G_{k-4}-G_{k-2}\right)&\text{if }k\geq 8,
\end{cases}
\]
where the \begin{it}Eisenstein series\end{it} $G_k$ is defined by 
\[
G_k(\tau):=\frac{B_k}{2k}+\sum_{n\geqslant 1} \sigma_{k-1}(n)q^n
\]
for the Bernoulli numbers $B_k$ and $\sigma_r(n):=\sum_{d\mid n} d^{r}$.
\begin{cor}\label{cor:PrimeDetecting}
If $f\in \Omega$, then $f$ is a linear combination of the forms $D^nH_k$ for $n\geqslant 0$ and $k\geqslant 6$. Conversely, if a linear combination $f$ of the forms $D^nH_k$ with $n\geqslant 0$ and $k\geqslant 6$ has non-negative Fourier coefficients, then $f\in\Omega$.
\end{cor}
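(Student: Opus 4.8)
The plan is to obtain Corollary \ref{cor:PrimeDetecting} as a direct consequence of Theorem \ref{thm:PrimeDetecting} together with the classification of $\mathcal{E}\cap\Omega$ in \cite[Theorem 2.3]{CvIO}; no analytic input beyond Theorem \ref{thm:PrimeDetecting} is required. For the first assertion, if $f\in\Omega$ then Theorem \ref{thm:PrimeDetecting} gives $f\in\mathcal{E}\cap\Omega$, and \cite[Theorem 2.3]{CvIO}, which describes $\mathcal{E}\cap\Omega$ precisely as the set of linear combinations of the forms $D^nH_k$ ($n\geqslant 0$, $k\geqslant 6$) all of whose Fourier coefficients are non-negative, then exhibits $f$ as such a linear combination. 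The only bookkeeping here is to confirm that the classification of $\mathcal{E}\cap\Omega$ in \cite[Theorem 2.3]{CvIO} is stated, or can be restated, in terms of the forms $D^nH_k$ defined above.

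For the converse, let $f=\sum_{n,k}c_{n,k}D^nH_k$ be any such linear combination with $c_f(m)\geqslant 0$ for all $m$. First, $f\in\mathcal{E}$: each $H_k$ is by definition a polynomial in $D$ applied to the Eisenstein series $G_j$, and $\mathcal{E}$ is closed under $D$ and under linear combinations, whence $H_k\in\mathcal{E}$ and so $f\in\mathcal{E}$. Second, a short computation from the definition of $H_k$ using $\sigma_r(p)=1+p^r$ shows that the $p$-th Fourier coefficient of $H_k$ vanishes for every prime $p$; since $D^n$ only multiplies the $p$-th coefficient by $p^n$, this persists for each $D^nH_k$, hence for $f$, so that $c_f(p)=0$ at every prime. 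Feeding $f\in\mathcal{E}$ and the non-negativity hypothesis into the sufficiency direction of \cite[Theorem 2.3]{CvIO} (which also excludes spurious vanishing of $c_f$ at non-primes within the span of the $D^nH_k$) yields $f\in\mathcal{E}\cap\Omega\subseteq\Omega$.

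I do not anticipate a genuine obstacle: all of the real difficulty in the classification of prime-detecting quasimodular forms sits in Theorem \ref{thm:PrimeDetecting}, i.e.\ in establishing that the Fourier coefficients of a nonzero quasimodular cusp form change sign infinitely often. Given that, the deduction of Corollary \ref{cor:PrimeDetecting} is purely formal, the only things to check being the notational reconciliation with \cite[Theorem 2.3]{CvIO} and the elementary identity $c_{H_k}(p)=0$ for primes $p$.
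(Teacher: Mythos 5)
Your proposal is correct and matches the paper's proof: the corollary is deduced exactly as you describe, by combining Theorem \ref{thm:PrimeDetecting} with the classification of $\mathcal{E}\cap\Omega$ in \cite[Theorem 2.3]{CvIO}. The extra verifications you sketch for the converse (that $H_k\in\mathcal{E}$ and $c_{H_k}(p)=0$ at primes) are part of what the citation to \cite[Theorem 2.3]{CvIO} already supplies, so the paper simply cites it without repeating them.
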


\begin{rem}
In the introduction of \cite{CvIO}, they conjecture that $\Q[n]$-linear combinations of the $M_a(n)$s detect primes if and only if they are $\Q[n]$-linear combinations of the forms in \cite[Table 1]{CvIO}. Given the classification of prime-detecting quasimodular forms in Corollary \ref{cor:PrimeDetecting}, their conjecture is equivalent to showing that a prime-detecting $\Q[n]$-linear combination of the $M_a(n)$ arises from a $\Q[n]$-linear combination of the forms $H_k$ with $k\geqslant 6$ if and only if it arises from a $\Q[n]$-linear combination of the forms in \cite[Table 1]{CvIO}.
\end{rem}
Theorem \ref{thm:PrimeDetecting} actually follows from a slightly stronger statement. Namely, let $\widetilde{\Omega}$ be the set of quasimodular forms (of mixed weights) which vanish at every prime, i.e., we have $f\in\widetilde{\Omega}$ if and only if $c_f(p)=0$ for every prime $p$. Then it turns out that our proof of Theorem \ref{thm:PrimeDetecting} implies that $\widetilde{\Omega}=\mathcal{E}\cap \widetilde{\Omega}$. Since $\Omega\subseteq \widetilde{\Omega}$, this implies Theorem \ref{thm:PrimeDetecting}. 

Since the space spanned by the $H_k$ is infinite-dimensional, it may not be obvious how to check whether a given form lies in this space. However, there exists a finite check whether elements of $\mathcal{E}$ lie in $\widetilde{\Omega}$ based on the growth of their Fourier coefficients.
\begin{thm}\label{thm:FiniteCheck}
Suppose that $f\in\mathcal{E}$. Let $r\in\N$ be such that for every $\varepsilon>0$ there exists $C_{\varepsilon}>0$ satisfying
\[
|c_f(n)|\leq C_{\varepsilon} n^{r-1+\varepsilon}
\]
for every $n\in\N$. Then $f\in\widetilde{\Omega}$ if and only if there exist $r$ primes $p_1,\dots,p_{r}$ for which
\[
c_f\left(p_j\right)=0.
\]
\end{thm}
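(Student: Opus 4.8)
The plan is to reduce the statement to elementary facts about single–variable polynomials; the real content is the observation that, \emph{because $f$ is an Eisenstein series}, the assignment $p\mapsto c_f(p)$ over primes $p$ is the restriction of a fixed polynomial. One direction is immediate: if $f\in\widetilde{\Omega}$ then $c_f(p)=0$ for every prime $p$, so we may take $p_1,\dots,p_r$ to be the first $r$ primes.

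For the converse, I would first write $f$ as a finite linear combination $f=\sum_{(j,k)}\lambda_{j,k}\,D^jG_k$ with $j\geq 0$, $k\geq 2$ even and $\lambda_{j,k}\in\C$, which is possible since $\mathcal{E}$ is by definition spanned by the Eisenstein series and their $D$–derivatives. Because $D^jG_k=\sum_{n\geq 1}n^j\sigma_{k-1}(n)q^n$ and $\sigma_{k-1}(p)=1+p^{k-1}$ for a prime $p$, one gets $c_{D^jG_k}(p)=p^j+p^{j+k-1}$, and hence
\[
c_f(p)=P_f(p),\qquad P_f(x):=\sum_{(j,k)}\lambda_{j,k}\bigl(x^j+x^{j+k-1}\bigr)\in\C[x].
\]
The Eisenstein hypothesis is essential at exactly this point: for a general quasimodular form — a cusp form such as $\Delta$, or a CM newform — the sequence $\bigl(c_f(p)\bigr)_p$ is far from polynomial, and the statement of the theorem would fail.

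Next I would bound $\deg P_f$ by the growth exponent. If $P_f\equiv 0$ there is nothing to prove, since then $c_f(p)=0$ for all primes. Otherwise let $d=\deg P_f$ with leading coefficient $a_d\neq 0$; then $|c_f(p)|=|P_f(p)|\geq \tfrac12|a_d|\,p^{d}$ for all large primes $p$, while the hypothesis supplies $|c_f(p)|\leq C_{1/2}\,p^{\,r-1/2}$. Comparing these as $p\to\infty$ forces $d\leq r-1/2$, so $d\leq r-1$. The final step is purely algebraic: $P_f$ has the $r$ distinct primes $p_1,\dots,p_r$ among its roots, but a nonzero polynomial of degree at most $r-1$ has at most $r-1$ roots; therefore $P_f\equiv 0$, whence $c_f(p)=0$ for every prime $p$, i.e.\ $f\in\widetilde{\Omega}$.

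I do not anticipate a genuine obstacle: each step is mechanical, and the argument is independent of — and far simpler than — the sign-change analysis behind Theorem~\ref{thm:PrimeDetecting}. The only things requiring minor care are the bookkeeping in the first step (checking that $P_f$ reproduces $c_f(p)$ at \emph{all} primes, not merely large ones) and the observation that cancellation among the $D^jG_k$ can make $\deg P_f$ strictly smaller than the largest value of $j+k-1$ that occurs — which is precisely why the hypothesis should be phrased via the growth of $c_f$ rather than the expansion of $f$.
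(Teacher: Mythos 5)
Your proposal is correct and follows essentially the same route as the paper: write $f$ as a finite combination of the $D^jG_k$, observe that $p\mapsto c_f(p)$ is a polynomial $P_f(p)$, bound its degree by the growth hypothesis, and conclude that vanishing at $r$ distinct primes forces $P_f\equiv 0$. The only (harmless, arguably cleaner) difference is that you bound $\deg P_f\leq r-1$ directly by comparing the leading term of $P_f$ with the bound $|c_f(p)|\ll p^{r-1/2}$ at primes and then count roots, whereas the paper bounds the exponents $\ell+k$ in the expansion of $f$ and finishes with a Vandermonde system; your version sidesteps possible cancellation among the $D^jG_k$ and matches the stated count of exactly $r$ primes.
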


\begin{rem}
From Theorem \ref{thm:FiniteCheck}, we see that the coefficients of an element of $\mathcal{E}$ either vanish at all primes or at finitely many of them. 
\end{rem}

The paper is organized as follows. In Section \ref{sec:prelims}, we introduce some preliminaries. In Section \ref{sec:signchanges}, we investigate the sign changes of Fourier coefficients of quasimodular cusp forms. We prove the main theorems, Theorem \ref{thm:PrimeDetecting} and Corollary \ref{cor:PrimeDetecting} in Section \ref{sec:main}. We finally prove Theorem \ref{thm:FiniteCheck} in Section \ref{sec:FiniteCheck}.

\section*{Acknowledgement}

The research was conducted during the conference HKU Number Theory Days 2025. The authors thank the Department of Mathematics at HKU and the Institute of Mathematical Research at HKU for supporting the conference and hosting the second author. The authors wish to thank Ken Ono, Jan-Willen van Ittersum, and Toshiki Matsusaka for their encouragement and helpful comments on an earlier version of this paper.

\section{Preliminaries}\label{sec:prelims}

\begin{it}Modular forms (for $\SL_2(\Z)$) of weight $k$\end{it} are holomorphic functions on the upper-half plane which satisfy 
\[
f\left(\frac{a\tau+b}{c\tau+d}\right)=(c\tau+d)^kf(\tau)
\]
for every $\left(\begin{smallmatrix}a&b\\c&d\end{smallmatrix}\right)\in\SL_2(\Z)$ and have moderate growth at cusps in the sense that they have Fourier expansions of the shape 
\[
f(\tau)=\sum_{n\geqslant 0} c_f(n)q^n.
\]
Those holomorphic modular forms which vanish at the cusps (i.e., $c_f(0)=0$) are called \begin{it}cusp forms\end{it}. We consider modular forms of full level (i.e. $\SL_2(\Z)$) throughout this paper.

The space of modular forms over $\SL_2(\Z)$ is a graded ring (graded by the weight) generated by $G_4$ and $G_6$ (as a ring). We call the space generated by $G_4$, $G_6$ and the weight two Eisenstein series $E_2$, the space of \begin{it}quasimodular forms\end{it}. This again has a grading (with the grading $2a+4b+6c$ for $E_2^aE_4^bE_6^c$), although we consider forms of mixed weight throughout this paper. The space of quasimodular forms is preserved under differentiation, and the space splits into the subspaces $\mathcal{E}$ of quasimodular Eisenstein series and the subspace $\mathcal{S}$ of \begin{it}quasimodular cusp forms\end{it} (i.e., the space spanned by cusp forms and their derivatives).

\section{Sign changes of quasimodular cusp forms}\label{sec:signchanges}
In this section, we show that Fourier coefficients of quasimodular cusp forms exhibit sign changes.
\begin{lem}\label{lem:signchanges}
Suppose that $F\in \mathcal{S}$ has a Fourier expansion 
\[
F(\tau)=\sum_{n\geq 1}c_F(n)q^n
\]
with $c_{F}(n)\in\R$. If $F\neq 0$, then the sequence $c_{F}(p)$, running over $p$ prime, has infinitely many sign changes. 
\end{lem}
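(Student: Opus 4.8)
The plan is to extract the dominant term of $c_F(p)$, express it through normalized Hecke eigenvalues, and then use a second-moment computation to show that this term is bounded away from $0$, with both signs, along positive proportions of primes.

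\emph{Step 1 (reduction to the top weight).} Writing $F\in\mathcal S$ as a finite combination of derivatives of cusp forms and regrouping by derivative order and weight, I would put $F=\sum_{j,\ell}D^{j}f_{j,\ell}$ with $f_{j,\ell}\in S_{\ell}$ and only finitely many nonzero; since $c_F(n)\in\mathbb R$ and the Hecke operators are self-adjoint, the eigenforms occurring have real Hecke eigenvalues. For a normalized Hecke eigenform $g\in S_{\ell}$ set $\lambda_g(p):=a_g(p)p^{-(\ell-1)/2}\in[-2,2]$ (Deligne's bound). The term $D^{j}f_{j,\ell}$ contributes $p^{j}c_{f_{j,\ell}}(p)$ to $c_F(p)$, and an eigenform $g\in S_\ell$ appearing in $f_{j,\ell}$ with coefficient $\beta$ contributes $\beta\lambda_g(p)p^{\,j+(\ell-1)/2}$; the exponent $j+(\ell-1)/2$ is exactly $(k-1)/2$ when $2j+\ell=k$. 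Letting $K$ be the largest weight present in $F$, all ``top'' contributions have exponent $(K-1)/2$ and every other one has exponent at most $(K-3)/2$, so
\[
c_F(p)=p^{(K-1)/2}\bigl(Y_p+O(p^{-1})\bigr),\qquad Y_p:=\sum_{m=1}^{M}\beta_m\lambda_{g_m}(p),
\]
where the $g_m$ are pairwise distinct normalized Hecke eigenforms (eigenforms attached to distinct $j$ have distinct weights), $\beta_m\in\mathbb R\setminus\{0\}$, and $M\ge 1$ because the weight-$K$ part of $F$ is nonzero; in particular $|Y_p|\le B:=2\sum_m|\beta_m|$.

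\emph{Step 2 (moments of $Y_p$).} I would then invoke the prime number theorem for automorphic $L$-functions: $L(s,g_m)\ne0$ on $\re(s)=1$ gives $\sum_{p\le x}\lambda_{g_m}(p)=o(\pi(x))$; the Rankin--Selberg $L$-functions $L(s,g_m\times g_{m'})$ ($m\ne m'$) being entire and nonvanishing on $\re(s)=1$ give $\sum_{p\le x}\lambda_{g_m}(p)\lambda_{g_{m'}}(p)=o(\pi(x))$; and $\lambda_g(p)^2=\lambda_g(p^2)+1$ together with $\sum_{p\le x}\lambda_{g_m}(p^2)=o(\pi(x))$ (the simple pole of $L(s,g_m\times g_m)$ at $s=1$) gives $\sum_{p\le x}\lambda_{g_m}(p)^2\sim\pi(x)$. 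Expanding $Y_p$ and $Y_p^2$, this yields $\pi(x)^{-1}\sum_{p\le x}Y_p\to0$ and $\pi(x)^{-1}\sum_{p\le x}Y_p^2\to A:=\sum_m\beta_m^2>0$.

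\emph{Step 3 (sign changes).} From $|Y_p|\le B$ we get $|Y_p|\ge Y_p^2/B$, hence $\pi(x)^{-1}\sum_{p\le x}|Y_p|\ge A/B+o(1)$; combined with $\pi(x)^{-1}\sum_{p\le x}Y_p\to0$ this forces $\pi(x)^{-1}\sum_{p\le x}\max(\pm Y_p,0)\ge \tfrac{A}{2B}+o(1)$, and then $\max(Y_p,0)\le\delta+B\cdot\mathbf 1\{Y_p>\delta\}$ with $\delta:=\tfrac{A}{4B}$ yields $\#\{p\le x:\,Y_p>\delta\}\gg\pi(x)$ and symmetrically $\#\{p\le x:\,Y_p<-\delta\}\gg\pi(x)$. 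Both sets are infinite, and for large $p$ in them the displayed formula makes $c_F(p)$ positive, respectively negative; so $c_F(p)$ changes sign infinitely often over the primes.

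\emph{Main obstacle.} Steps 1 and 3 are bookkeeping; the content sits in Step 2, and above all in the ``asymptotic orthogonality'' $\sum_{p\le x}\lambda_{g_m}(p)\lambda_{g_{m'}}(p)=o(\pi(x))$ for distinct eigenforms of possibly different weights, which relies on the holomorphy and edge-of-strip non-vanishing of the Rankin--Selberg convolutions. (One could instead deduce the sign changes from the joint Sato--Tate equidistribution of the Satake angles of $g_1,\dots,g_M$, trading the moment computation for a heavier equidistribution input.)
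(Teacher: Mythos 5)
Your proof is correct and follows essentially the same route as the paper: decompose $F$ into derivatives of Hecke eigenforms, compare the first and second moments of $c_F(p)$ over primes using the prime number theorem for $L(s,f)$ together with Rankin--Selberg orthogonality for distinct eigenforms (of possibly different weights), and use Deligne's bound to force both signs to occur. The only real difference is in the finish: you normalize by the top exponent $p^{(K-1)/2}$ and extract a positive proportion of primes with $Y_p>\delta$ and with $Y_p<-\delta$ directly, whereas the paper keeps the unnormalized sums, bounds $|c_F(p)|\leqslant C_F p^{\alpha_0-1}$, and derives a contradiction from an assumed eventual constancy of sign.
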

\begin{proof}
Suppose $F\in \mathcal{S}$ is non-zero. Note that if $c_F(n)\geqslant 0$ for all $n\in\N$, then $|\sum_{p\leqslant x}c_F(p)| = \sum_{p\leqslant x}|c_F(p)| = \sum_{p\leqslant x} c_F(p)$. On the other hand, if the first sum grows slower as $x\to\infty$, then it must exhibit cancellation, and infinitely-many sign changes must occur. Based on this, we follow a standard argument where we compare the growth of
\[
\sum_{p\leqslant x} c_{F}(p)
\]
with 
\[
\sum_{p\leqslant x} |c_{F}(p)|^2
\]
as $x\to\infty$, showing that cancellation must occur in the first sum. Recalling that a quasimodular cusp form $F$ is a linear combination of cusp forms and their derivatives, and the space of modular forms has an orthonormal basis, we may express a quasimodular cusp form $F$ as 
\begin{eqnarray*}
    F= \sum_f \sum_j A_{f}(j) f^{(j)} 
\end{eqnarray*}
where $A_f(j)\in \mathbb{C}$ and $f^{(j)}=D^j f$ is the normalized $j$-th derivative of $f$. Moreover, we choose $f$'s to be (mutually orthogonal) Hecke eigenforms, which are primitive forms of not necessarily same weight for $SL_2(\mathbb{Z})$. We then have
\begin{eqnarray*}
    c_F(p) = \sum_f \sum_j A_{f}(j) p^j a_f(p) = \sum_{f} P_{f}(p) a_f(p)
\end{eqnarray*}
for some polynomials $P_{f}(x)\in \C[x]$. For every $f$, we let the weight of $f$ to be $k_{f}\geqslant 12$ and the degree of $P_{f}$ to be $j_{f} \geqslant 0$. The leading coefficient of $P_f$ is $A_f(j_f)$ which we simply denote by $A_{f}$. Recall, that from the work of Deligne, we know the Ramanujan bound for $f$, that is $|a_f(p)|\leqslant 2 p^{(k_f-1)/2}$. It then follows that for a given prime $p$, $P_{f}(p)a_f(p) = A_f p^{j_f} a_f(p) + O\left(p^{j_{f} + \frac{k_{f}-1}{2} -1}\right)$, where the implied constant depends at most on $F$.

Summing over the primes, and using the prime number theorem in this setting (see \cite[Theorem 5.13]{IwaniecKowalski}) we have
\[
\sum_{p\leqslant x} P_{f}(p)a_{f}(p) = A_{f}\sum_{p\leqslant x} p^{j_f} a_{f}(p) + O\left(\frac{x^{j_f + \frac{k_f-1}{2}}}{\log(x)}\right) = o\left(\frac{x^{j_{f} + \frac{k_{f}+1}{2}}}{\log(x)}\right).
\]
This gives us that 
\begin{align}\label{eqm}
\sum_{p\leqslant x} c_F(p) = o\left(\frac{x^{\alpha_0}}{\log(x)}\right)
\end{align}
where $\alpha_0:= \max_{f}\left\{\alpha_{f}\right\}$ with $\alpha_{f} := j_f + \frac{k_f +1}{2}$.

Next, 
\begin{eqnarray*}
|c_F(p)|^2 
&=& \sum_{f,g} P_f(p) \overline{P_g(p)} a_f(p)\overline{a_g(p)} \\
&=& \sum_f |P_f(p)|^2 |a_f(p)|^2 + \sum_{f\neq g} P_f(p) \overline{P_g(p)} a_f(p)\overline{a_g(p)}.     
\end{eqnarray*}
(In fact, $\overline{a_g(p)}=a_g(p)$ for all $p$.)

For every $f$ define $\beta_{f}$ to be $2\alpha_f - 1$.
From the Rankin-Selberg theory and from the prime number theorem in this context\footnote{
For example, with \cite[Theorem 5.19]{Gel} and \cite[Corollary 1.5]{LY}, 
$$ \sum_{p\leqslant x} \frac{a_f(p)\overline{a_g(p)}}{p^{(k_f+k_g)/2}} = \delta_{f=g}\cdot\big(\log\log x + c\big) 
+ O(\exp(-c'\sqrt{\log x}))
$$
where $\delta_{*}=1$ if $*$ is true or $0$ otherwise, and $c$ and $c'$ are some positive constants.}, 
we may deduce that 
\[
\sum_{p\leqslant x} P_{f}(p) \overline{P_{g}(p)} a_f(p) \overline{a_{g}(p)} = \delta_{f=g} \frac{|A_f|^2x^{\beta_{f}}}{\log(x)} + o\left(\frac{x^{\alpha_{f} + \alpha_{g}-1}}{\log(x)}\right)
\]
Let $\beta_0 := 2\alpha_0-1$, and let $M$ denote the set of forms $f$ such that $\beta_f = \beta_0$.

Observe that the non-vanishing of $F$ implies that there exists at least one $f\in M$ for which $A_f$ is non-zero. Plugging this all in we get,
\begin{align}\label{eqms}
\sum_{p\leqslant x} |c_F(p)|^2 = \frac{x^{\beta_{0}}}{\log(x)}\left(\sum_{f\in M} |A_f|^2 + o(1)\right) \gg \frac{x^{\beta_{0}}}{\log(x)}.
\end{align}

It follows from Deligne's bound that for all prime $p$, 
\[
|c_F(p)|\leqslant \sum_{f} |P_{f}(p) a_f(p)| \leqslant \sum_{f} \|P_f\| p^{j_f}|a_f(p)| \leqslant C_F p^{\alpha_0-1}
\]
where $\|P\|=\sum_{r=0}^{m} |A_r|$ if $P(x)=\sum_{r=0}^m A_rx^r\in \mathbb{C}[x]$ and $C_F>0$ is a constant.  This yields
\[
\sum_{p\leqslant y} |c_F(p)|^2 = O\left(\frac{y^{2\alpha_0-1}}{\log(y)}\right).
\]

We prove the infinitude of sign changes in $\{c_F(p)\}$ by contradiction. Suppose without loss of generality that $c_F(p) \geqslant 0$ for all $p \geqslant y$ for some fixed $y$. 

With \eqref{eqm}, we have
\[
\sum_{p\leqslant x} |c_F(p)|^2 \leqslant  C_F x^{\alpha_0-1} \sum_{y\leqslant p\leqslant x} c_F(p) + O\left(\frac{y^{2\alpha_0-1}}{\log(y)}\right)= o\left(\frac{x^{2\alpha_0-1}}{\log(x)}\right)
\]
as $x\to \infty$. This contradicts to \eqref{eqms}, for $\beta_0=2\alpha_0-1$. Our proof is complete. 


\end{proof}

\begin{rem}
    Standard modifications of the above argument maybe used to give a lower bound on the number of sign changes. For our purposes, the existence of infinitely many sign changes suffices and hence we do not pursue this here.
\end{rem}

\section{Proof of the main theorem}\label{sec:main}
Before proving Theorem \ref{thm:PrimeDetecting}, we require a lemma about Eisenstein series, which essentially shows that there are at most finitely many sign changes. We define $\sgn(0)=0$.

\begin{lem}\label{lem:Eisenstein}
Suppose that $f\in\mathcal{E}$ has the Fourier expansion 
\[
f(\tau)=\sum_{n\geqslant 0} c_f(n)q^n
\]
with $c_f(n)\in\R$. Then there exists $\varepsilon\in\{-1,0,1\}$ such that for all sufficiently large prime $p$ we have 
\[
\sgn(c_f(p))=\varepsilon.
\]
\end{lem}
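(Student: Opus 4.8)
The plan is to prove something slightly more precise: for $f\in\mathcal{E}$ there is a fixed polynomial $Q\in\R[x]$, depending only on $f$, with $c_f(p)=Q(p)$ for every prime $p$. Granting this, the lemma is immediate: a nonzero real polynomial takes the sign of its leading coefficient on all sufficiently large real arguments, and there are infinitely many primes, so $\sgn(c_f(p))$ is eventually equal to the sign of the leading coefficient of $Q$; if instead $Q\equiv 0$, then $c_f(p)=0$ for every prime and we take $\varepsilon=0$.

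To construct $Q$, I would use the definition of $\mathcal{E}$: every $f\in\mathcal{E}$ is a finite linear combination
\[
f=\sum_{k,n} a_{k,n}\,D^n G_k,
\]
the sum running over even $k\geq 2$ and integers $n\geq 0$, with coefficients $a_{k,n}\in\R$ (reality of the $a_{k,n}$ follows from that of $c_f$, since the $D^nG_k$ have rational Fourier coefficients and are linearly independent). Since $D=q\,\frac{d}{dq}$ multiplies the coefficient of $q^m$ by $m$, the $p$-th Fourier coefficient of $D^nG_k$ equals $p^n\sigma_{k-1}(p)$, and for a \emph{prime} $p$ one has $\sigma_{k-1}(p)=1+p^{k-1}$. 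Hence
\[
c_f(p)=\sum_{k,n} a_{k,n}\,p^n\sigma_{k-1}(p)=\sum_{k,n} a_{k,n}\bigl(p^n+p^{n+k-1}\bigr),
\]
which is exactly $Q(p)$ for the $p$-independent polynomial $Q(x):=\sum_{k,n} a_{k,n}(x^n+x^{n+k-1})\in\R[x]$. This finishes the construction, and with it the proof as outlined in the first paragraph.

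I do not expect a genuine obstacle here; the only content is the observation that $\sigma_{k-1}$ is ``polynomial on primes'', which collapses $c_f(p)$ to a polynomial in $p$, and the only step needing a line of care is the bookkeeping that passes from the $\mathcal{E}$-expansion to the identity $c_f(p)=Q(p)$ (including the remark on reality of the coefficients). It is worth noting that the possibility $Q\equiv 0$ is not vacuous: it is precisely what happens for the forms $H_k$ appearing in Corollary~\ref{cor:PrimeDetecting}, which lie in $\mathcal{E}$ and vanish at every prime, so that case must be carried along and yields $\varepsilon=0$.
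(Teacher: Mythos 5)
Your proposal is correct and follows essentially the same route as the paper: both expand $f$ in the basis $D^{\ell}G_k$, use $\sigma_{k-1}(p)=1+p^{k-1}$ to collapse $c_f(p)$ to a fixed real polynomial in $p$, and then read off the eventual sign from the leading coefficient (with $\varepsilon=0$ in the identically-zero case). No substantive difference to report.
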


\begin{proof}
We note first that 
\[
D^{\ell}G_k(\tau)=\delta_{\ell=0}\frac{B_k}{2k} + \sum_{n\geqslant 1} n^{\ell}\sigma_{k-1}(n)q^n.
\]
Since
\[
f=\sum_{k,\ell}\alpha_{k,\ell} D^{\ell}G_k
\]
for some $\alpha_{k,\ell}\in\C$. For $n\in\N$ the $n$-th Fourier coefficient of $f$ is 
\[
c_f(n)=\sum_{k,\ell}\alpha_{k,\ell} n^{\ell}\sigma_{k-1}(n).
\]
For $n=p$ prime, we thus have 
\[
c_f(p)=\sum_{k,\ell}\alpha_{k,\ell} p^{\ell}\left(1+p^{k-1}\right).
\]
We next combine all like powers of $p$ to obtain 
\[
c_f(p)=\sum_{r} \beta_r p^r
\]
for some $\beta_r$ (since $c_f(p)\in\R$, we indeed have $\beta_r\in\R$). Note that $\beta_r$ is some linear combination of the values $\alpha_{k,\ell}$, and hence only depends on $f$. If $\beta_r=0$ for every $r$, then $c_f(p)=0$ for all $p$. So the claim holds for $\varepsilon=0$ in this case.

On the other hand, if $\beta_r\neq 0$ for some $r$, then we may choose $r_0$ maximal so that $\beta_{r_0}\neq 0$. Then as $p\to\infty$ we have 
\[
c_f(p)=\beta_{r_0}p^{r_0}+O\left(p^{r_0-1}\right),
\]
where the implied constant only depends on $f$. We see that for $p$ sufficiently large we have 
\[
\sgn\left(c_f(p)\right)=\sgn\left(\beta_{r_0}\right)=:\varepsilon.
\]
This yields the claim.
\end{proof}
We are now ready to prove Theorem \ref{thm:PrimeDetecting}.
\begin{proof}[Proof of Theorem \ref{thm:PrimeDetecting}]
Suppose that $F\in \widetilde{\Omega}$.  Let us suppose that $F = \sum_{\ell, k,j} \alpha_{\ell,k,j} D^{\ell} f_{k,j}$, where $f_{k,j}$ is either $G_k$ or a cuspidal Hecke eigenform of weight $k$. Normalizing the Hecke eigenforms to have real coefficients, we see that we may write $F=F_{\re}+iF_{\Im}$ with
$F_{\re} := \sum_{\ell,k,j} \re(\alpha_{\ell,k,j}) D^{\ell} f_{k,j}$ and $F_{\Im} := \sum_{\ell,k,j} \Im(\alpha_{\ell,k,j}) D^{\ell} f_{k,j}$. In particular, both $F_{\re}$ and $F_{\Im}$ have real Fourier coefficients. Also observe that $F\in \widetilde{\Omega}$ if and only if both $F_{\re}$ and $F_{\Im}$ belong to $\widetilde{\Omega}$. 

To prove the theorem, it suffices to show that both $F_{\re}$ and $F_{\Im}$ belong to $\mathcal{E}$. We shall show this for $F_{\re}$, with the other case being identical. To ease notation, we shall denote $F_{\re}$ simply as $F$.
We split 
\[
F=F_E+F_S
\]
with $F_E\in\mathcal{E}$ and $F_S\in\mathcal{S}$. Since $F_E\in\mathcal{E}$ with real Fourier coefficients, Lemma \ref{lem:Eisenstein} implies that there exists $\varepsilon\in\{-1,0,1\}$ such that for $p$ sufficiently large we have 
\[
\sgn\left(c_{F_E}(p)\right)=\varepsilon.
\]
Since $c_F(p)=0$ for all $p$, for $p$ sufficiently large we have 
\[
\sgn\left(c_{F_S}(p)\right)=-\sgn\left(c_{F_E}(p)\right)=-\varepsilon.
\]
Thus $c_{F_S}(p)$ only has at most finitely many sign changes. By Lemma \ref{lem:signchanges}, this implies that $F_S=0$. Therefore $F=F_E\in\mathcal{E}$. So $F\in \mathcal{E}\cap \widetilde{\Omega}$, and we see that $\widetilde{\Omega}=\mathcal{E}\cap\widetilde{\Omega}$.\qedhere
\end{proof}
Corollary \ref{cor:PrimeDetecting} now follows immediately.
\begin{proof}[Proof of Corollary \ref{cor:PrimeDetecting}]
By Theorem \ref{thm:PrimeDetecting}, if $f\in\Omega$, then $f\in \mathcal{E}\cap\Omega$. The claimed classification of $\mathcal{E}\cap\Omega$ is given in \cite[Theorem 2.3]{CvIO}.
\end{proof}
\begin{rem}The proof of Theorem \ref{thm:PrimeDetecting} and Corollary \ref{cor:PrimeDetecting} relies on sign changes for Fourier coefficients of quasimodular cusp forms in the subsequence of primes and the absence of sign changes for Fourier coefficients of quasimodular Eisenstein series in the same subsequence. In principle, if there is some other infinite subset $A\subseteq\N$ for which the quasimodular Eisenstein series lack sign changes and the quasimodular cusp forms exhibit them, then one should be able to argue that only the Eisenstein series can detect the set $A$. Following this idea, we plan to investigate the detection of primes in arithmetic progressions in a forthcoming paper.
\end{rem}
\section{Finite Checks for prime-vanishing forms}\label{sec:FiniteCheck}
In this section, we show how to check vanishing at a finite number of primes to determine if an element lies in $\widetilde{\Omega}$ or not.

\begin{proof}[Proof of Theorem \ref{thm:FiniteCheck}]
Since $f\in \mathcal{E}$, we may write it as
\[
f = \sum_{\ell,k} \alpha_{\ell,k} D^{\ell} G_k = \sum_{n\geqslant 0} c_f(n) q^n.
\]
Since the coefficients of $D^{\ell}G_k$ grow at a rate (where $a(n)\ll_{\varepsilon} b(n,\varepsilon)$ means that for every $\varepsilon>0$ there exists a constant $C_{\varepsilon}>0$ depending only on $\varepsilon$ such that $|a(n)|\leq C_{\varepsilon} |b(n,\varepsilon)|$)
\[
n^{\ell}\sigma_{k-1}(n)\ll_{\varepsilon} n^{\ell+k-1+\varepsilon},
\]
we conclude that $\ell+k\leq r$. Moreover, we may choose $r$ to be the maximum of $\ell + k$ for which $\alpha_{\ell,k}\neq 0$. In particular, $r$ is finite.

As in the proof of Lemma \ref{lem:Eisenstein}, we may rearrange to obtain
\[
c_f(p) = \sum_{j=0}^{r} \beta_j p^j.
\]
Now, if $c_f(p_j) = 0$ for $1\leqslant j\leqslant r$, then we obtain the system of equations
\[
\begin{pmatrix}
    1&p_1&p_1^2&\ldots&p_1^r\\
    1&p_2&p_2^2&\ldots&p_2^r\\
    \vdots&\vdots&\vdots&\ddots&\vdots\\
    1&p_{r+1}&p_{r+1}^2&\ldots&p_{r+1}^r
\end{pmatrix}
\begin{pmatrix}
    \beta_0\\ \beta_1\\ \vdots \\ \beta_r
\end{pmatrix} = 0.
\]
This system, being a Vandermonde system, is (uniquely) solvable, and hence $\beta_0 = \beta_1 = \ldots = \beta_r = 0$. Hence the claim follows.
\end{proof}

\end{document}